\newtheorem{theorem}{Theorem}[section]
\newtheorem{lemma}[theorem]{Lemma}
\newtheorem{proposition}[theorem]{Proposition}
\theoremstyle{definition}
\theoremstyle{remark}
\newtheorem{remark}[theorem]{Remark}
\numberwithin{equation}{section}
\newcommand{\udc}[2]{\ensuremath \underset{#2}{\overset{#1}{\circ}}}
\newcommand{\linsys}[1]{\ensuremath \left| #1 \right|}
\DeclareMathOperator{\image}{im}
\DeclareMathOperator{\ext}{Ext}
\DeclareMathOperator{\divisorgroup}{Div}
\DeclareMathOperator{\jacobian}{Jac}
\DeclareMathOperator{\divisor}{div}
\DeclareMathOperator{\symmetric}{Sym}
\DeclareMathOperator{\supp}{Supp}
\DeclareMathOperator{\picard}{Pic}
\def\sheaf#1{\ensuremath \mathcal#1}
\begin{document}

\title[Infinitesimal deformations of complements of plumbings]{Infinitesimal deformations of complements of plumbings of rational curves}

\author{Dongsoo Shin}

\address{Department of Mathematics, Chungnam National University, 79 Daehak-ro, Yuseong-gu, Daejeon 305-764, Korea}

\email{dsshin@cnu.ac.kr}

\date{\today}

\subjclass[2000]{14B12}

\keywords{infinitesimal deformation, plumbing}

\thanks{This research was supported by Basic Science Research Program through the National Research Foundation of Korea(NRF) funded by the Ministry of Education, Science and Technology(2010-0002678).}

\begin{abstract}
We construct infinitesimal deformations on an open domain of a smooth projective surface given by a complement of plumbings of disjoint linear chains of smooth rational curves. We show that the infinitesimal deformations are not small deformations, that is, they change the complex structure away from the boundary of the domain.
\end{abstract}

\maketitle

Let $Z$ be a smooth projective surface and $H$ an ample divisor on $Z$. Let $D_i = \sum_{j=1}^{m_i} C_{ij}$ ($i=1,\dotsc,k$) be a linear chain of rational curves $C_{ij}$ with $C_{ij}^2 = -b_{ij}$ ($b_{ij} \ge 2$) on $Z$; that is, the dual graph of $D_i$ is given as follows:
    \begin{equation*}
    D_i = \udc{C_{i1}}{-b_{i1}}-\udc{C_{i2}}{-b_{i2}}- \cdots -\udc{C_{im_i}}{-b_{im_i}}.
    \end{equation*}
A \emph{plumbing} of a linear chain $D_i$ is a union of small tubular neighborhoods of each rational curves $C_{ij}$ ($j=1,\dotsc,m_i$). Assume that the supports of $D_i$'s are disjoint and each divisor $D_i$ can be contracted to an isolated rational singular point. Let $f : Z \to X$ be the map contracting $D_i$'s.

In this paper we construct infinitesimal deformations on the complement of the union of certain plumbings of $D_i$'s. In Section~\ref{section:tubular-neighborhood} we prove that, for each $i$, there is a plumbing, say $U_i$, of $D_i$ corresponding to the ample divisor $H$ on $Z$ such that $H^2(Z \setminus U_i, \sheaf{T_{Z \setminus U_i}})=0$. Furthermore we show that $H^2(Z \setminus U, \sheaf{T_{Z \setminus U}})=0$ where $U = \cup_{i=1}^{k} U_i$; Theorem~\ref{theorem:tubular-neighborhood}. Since $Z \setminus U$ is not compact, the vanishing does not imply that an infinitesimal deformation on $Z \setminus U$ is integrable. We leave the integrability question for future research. In Section~\ref{section:infinitesimal-deformations} we construct infinitesimal deformations on $Z \setminus U$ by restricting to $Z \setminus U$ meromorphic $1$-forms on $Z$ with poles along $C_{ij}$ of certain orders. The orders of poles are determined by $H$ uniquely up to constant multiple; Theorem~\ref{theorem:infinitesimal_deformation}.  We finally show that the infinitesimal deformations are not small deformations, that is, the infinitesimal deformations change the complex structure away from the boundary of $Z \setminus U$; Theorem~\ref{theorem:nontrivial}.

In order to construct such infinitesimal deformations, we apply a similar strategy in Takamura~\cite{Takamura-1996}, where he constructed infinitesimal deformations on a complement of a certain negative non-rational curve on a smooth surface of general type.

Throughout this paper we work over the field $\mathbb{C}$ of complex numbers.

\section{Holomorphic tubular neighborhood}
\label{section:tubular-neighborhood}

We first show that there is a special holomorphic tubular neighborhood for each divisor $D_i = \sum_{j=1}^{m_i} {C_{ij}}$ in the surface $Z$ with a vanishing cohomology condition; Proposition~\ref{theorem:tubular-neighborhood}.

\begin{lemma}\label{lemma:tubular-neighborhood}
Let $C$ be a smooth rational curve with $C^2 \le -1$ on $Z$. Then there is a holomorphic tubular neighborhood $V$ of $C$ in $Z$ such that $H^2(Z \setminus V, \sheaf{T_{Z \setminus V}})=0$.
\end{lemma}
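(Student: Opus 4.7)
The plan is to realize the open complement $X := Z\setminus V$ as a strongly $2$-convex complex manifold in the sense of Andreotti--Grauert and invoke the associated vanishing theorem.

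First I would invoke Grauert's contractibility criterion: since $C\cong\mathbb{P}^{1}$ has $C^{2}\le -1$, its normal bundle $\mathcal{N}_{C/Z}\cong\mathcal{O}_{\mathbb{P}^{1}}(C^{2})$ has negative degree, and a neighborhood of $C$ contracts analytically onto a point. Equivalently, on some open $W\supset C$ in $Z$ there is a smooth nonnegative function $\rho$ with $\rho^{-1}(0)=C$ that is strictly plurisubharmonic on $W\setminus C$. For $\epsilon>0$ small enough that $\{\rho\le 2\epsilon\}$ is compact and contained in $W$, set
\[
V \;:=\; \{\rho<\epsilon\};
\]
this is the candidate holomorphic tubular neighborhood.

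Next I would construct an exhaustion $\psi$ of $X=Z\setminus V$ with the right Levi signature. On the collar $\{\epsilon<\rho<2\epsilon\}\subset X$ put $\psi:=-\log(\rho-\epsilon)$ and extend it smoothly to all of $X$. Since $\{\rho\ge 2\epsilon\}\cap X$ is compact and $\psi\to+\infty$ along $\partial V$, any such extension is automatically a proper exhaustion. A direct computation yields
\[
\partial\bar\partial\psi \;=\; -\,\frac{\partial\bar\partial\rho}{\rho-\epsilon} \;+\; \frac{\partial\rho\wedge\bar\partial\rho}{(\rho-\epsilon)^{2}},
\]
which on the collar is a negative-definite form plus a rank-one positive semidefinite correction. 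Pairing with the direction dual to $\bar\partial\rho$ and letting $\rho\downarrow\epsilon$ shows that the Levi form of $\psi$ acquires at least one strictly positive eigenvalue throughout an open neighborhood of $\partial V$ in $X$. Hence $\psi$ is strongly $2$-convex outside a compact subset of $X$, i.e.\ $X$ is strongly $2$-convex in the sense of Andreotti--Grauert.

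Finally, the Andreotti--Grauert vanishing theorem gives $H^{p}(X,\mathcal{F})=0$ for every coherent analytic sheaf $\mathcal{F}$ on $X$ and every $p\ge 2$; specializing to $\mathcal{F}=\mathcal{T}_{X}=\mathcal{T}_{Z}|_{X}$ delivers the claim. The main technical point I anticipate is making the single-positive-eigenvalue estimate uniform on an open neighborhood of $\partial V$, rather than only pointwise on $\partial V$; once $\epsilon$ is taken small enough that $d\rho$ does not vanish on $\{\rho\le 2\epsilon\}\setminus C$, this is a routine consequence of the compactness of $\partial V$ and the smoothness of $\rho$, and the heavy lifting is bundled into the invocation of Grauert and Andreotti--Grauert.
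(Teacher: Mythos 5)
There is a genuine gap at the final step: you invoke ``the Andreotti--Grauert vanishing theorem'' for a manifold that is strongly $2$-convex \emph{outside a compact set}, but for $q$-convex spaces Andreotti--Grauert gives only \emph{finite-dimensionality} of $H^p(X,\mathcal{F})$ for $p\ge q$; vanishing requires strong $q$-\emph{completeness}, i.e.\ an exhaustion whose Levi form has at least $n-q+1$ positive eigenvalues \emph{everywhere} on $X$. Your function $\psi$ is controlled only on the collar $\{\epsilon<\rho<2\epsilon\}$; on the compact remainder of $X$ you say nothing about its Levi form, so you have established $2$-convexity, not $2$-completeness, and the vanishing does not follow from the theorem as quoted. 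That the distinction is real is shown already at level $q=1$: a $1$-convex neighborhood of the exceptional set of a non-rational singularity has $H^1(X,\mathcal{O}_X)$ finite but nonzero. Your conclusion can nevertheless be rescued, because the degree you need is the top degree $n=2$: by the theorem of Malgrange--Siu (every noncompact complex space of dimension $n$ has $H^n(X,\mathcal{F})=0$ for all coherent $\mathcal{F}$; equivalently, by Greene--Wu and Ohsawa, every connected noncompact $n$-dimensional manifold is strongly $n$-complete), one gets $H^2(Z\setminus V,\mathcal{T}_{Z\setminus V})=0$ outright. But note that this rescue makes your entire construction --- Grauert's criterion, the function $\rho$, the collar estimate --- superfluous, since \emph{any} $V$ with noncompact complement would then work; the machinery you set up does not actually contribute to the conclusion it is meant to deliver.

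Two secondary points. First, the lemma asks for a \emph{holomorphic tubular neighborhood}, and the sublevel set $V=\{\rho<\epsilon\}$ is not literally one; the paper obtains a genuine disk-bundle neighborhood from $\ext^1(\sheaf{N_{C,Z}},\sheaf{T_C})=0$, which forces the splitting $\sheaf{T_Z}|_C=\sheaf{T_C}\oplus\sheaf{N_{C,Z}}$, and you would need a similar step (or to shrink $V$ inside such a neighborhood). Second, for comparison, the paper's route is entirely different and stays elementary: it chooses two hyperplane sections $A\in\linsys{lH}$ and $A'\in\linsys{nA}$ with $A\cap A'$ pushed inside $V$ via the Abel--Jacobi map, observes that $Z_0=Z\setminus(A\cup V)$ and $Z_0'=Z\setminus(A'\cup V)$ are Stein (pseudoconvex subsets of the affine complements $Z\setminus A$, $Z\setminus A'$), writes $Z\setminus V=Z_0\cup Z_0'$, and kills $H^2$ by Mayer--Vietoris, since $H^{\ge 1}$ of coherent sheaves vanishes on Stein spaces. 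That argument needs no convexity theory beyond Steinness and, unlike your collar estimate, genuinely uses the position of $V$ relative to the curve configuration.
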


\begin{proof}
We use a similar method in Takamura~\cite{Takamura-1996}. Let $A \in \linsys{lH}$ ($l \gg 0$) be a smooth irreducible hyperplane section of $Z$. Fix a base point $p \in C \cup A$ of the Abel-Jacobi map $\alpha: \divisorgroup(A) \to \jacobian(A)$. Let $V_p$ be a small open neighborhood of $p$ in $A$. Choose a nonzero section $s \in H^0(A, \sheaf{O_A}(A))$. Set $R = \divisor(s)$, which is an effective divisor on $A$ of degree $d=A^2$. Note that the fundamental domain of the complex torus $\jacobian(A)$ is bounded and $\alpha(V_p)$ is an open set (by shrinking $V_p$ if necessarily) which contains the origin in $\jacobian(A)$; hence we have $\alpha(\symmetric^{nd}{V_p}) = \jacobian(A)$ for sufficiently large $n$ because the Abel-Jacobi map $\alpha$ is a homomorphism of abelian groups. Therefore we have $\alpha(nR) \in \alpha(\symmetric^{nd}{V_p})$ for sufficiently large $n$. Since $\alpha^{-1}(\alpha(nR)) = \mathbb{P}H^0(A, \sheaf{O_A}(nA))$, we have
    \[\alpha^{-1}(\alpha(nR)) \cap \symmetric^{nd}{V_p} = \mathbb{P}H^0(A, \sheaf{O_A}(nA)) \cap \symmetric^{nd}{V_p} \neq \varnothing.\]

Choose an effective divisor $R' \in \mathbb{P}H^0(A, \sheaf{O_A}(nA)) \cap \symmetric^{nd}{V_p}$ and let $s' \in H^0(A, \sheaf{O_A}(nA))$ be a section such that $R'=\divisor(s')$. Note that $\supp(R') \subset V_p$. Since $A$ is a hyperplane section, we have $H^1(Z, \sheaf{O_Z}((n-1)A)=0$; hence the restriction map
    \begin{equation*}
    H^0(Z, \sheaf{O_Z}(nA)) \to H^0(A, \sheaf{O_H}(nA))
    \end{equation*}
is surjective. Therefore there is an smooth irreducible curve $A' \in \linsys{nA}$ of $Z$ whose restriction on $A$ is $R'$. Then $A \cap A' \subset V_p$. We may assume that $A'$ intersects $C$ transversely and that $p \not\in A' \cap C$ because $A'$ is also a hyperplane section.

Since $\ext^1(\sheaf{N_{C,Z}}, \sheaf{T_C}) = 0$, we have $\sheaf{T_Z}|_C = \sheaf{T_C} \oplus \sheaf{N_{C,Z}}$. Therefore there is a holomorphic tubular neighborhood $V$ of $C$ in $Z$. We may assume the followings: (i) $V \supset V_p$ by shrinking $V_p$ if necessarily, (ii) $V \supset A \cap A'$, and (iii) at least one component of $V \cap A$ (and also $V \cap A'$) is a fiber of $V$, where $V$ is considered to be a holomorphic disk bundle over $C$.

We remark the following observation: Let $N$ be a holomorphic line bundle over an open Riemann surface $S$ and let $N_0$ be obtained by deleting a tubular neighborhood of the zero section. Then $N_0$ is biholomorphic to $S \times B$ where $B$ is a complement of an open disk in $\mathbb{C}$ because any holomorphic line bundle over an open Riemann surface is trivial. Since both $S$ and $B$ are Stein, so is $N$. In particular, the boundary $\partial N_0$ is pseudoconvex.

In our case, the set $Z_0 = Z \setminus (A \cup V)$ (or $Z_0' = Z \setminus (A' \cup V$) is pseudoconvex in the affine variety $Z \setminus A$ (respectively, $Z \setminus A'$) by the above observation. Therefore $Z_0$ and $Z_0'$ are Stein. And $Z_0 \cap Z_0'$ is also stein. Since $A \cap A' \subset V_p$, we have $Z \setminus V = Z_0 \cup Z_0'$. By the Mayer-Vietoris sequence, we have the exact sequence
    \[H^1(Z_0 \cap Z_0', \sheaf{T_Z}) \to H^2(Z_0 \cup Z_0', \sheaf{T_Z}) \to H^2(Z_0, \sheaf{T_Z}) \oplus H^2(Z_0', \sheaf{T_Z}).\]
Therefore, according to the vanishing of higher cohomology of a Stein space, we have $H^2(Z \setminus V, \sheaf{T_{Z \setminus V}})=0$.
\end{proof}

\begin{theorem}\label{theorem:tubular-neighborhood}
There is a holomorphic tubular neighborhood $U_i$ for each $D_i=\sum_{j=1}^{m_i} C_{ij}$ in $Z$ such that $H^2(Y, \sheaf{T_{Y_i}})=0$ where $Y_i = Z \setminus U_i$. Furthermore, setting $Y=Z \setminus (U_1 \cup \dotsb \cup U_k)$, we have $H^2(Y, \sheaf{T_Y})=0$.
\end{theorem}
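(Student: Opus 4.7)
The plan is to imitate the proof of Lemma~\ref{lemma:tubular-neighborhood} nearly verbatim, with $D_i$ (or the disjoint union $D=\bigsqcup_i D_i$) playing the role of the single rational curve $C$ and a strongly pseudoconvex tubular neighborhood $U_i$ (respectively $U=\bigsqcup_i U_i$) playing the role of the disk-bundle neighborhood $V$. The existence of $U_i$ with strictly pseudoconvex boundary is supplied by Grauert's contractibility theorem, which applies because $D_i$ is assumed to contract to an isolated rational singular point. Since the $D_i$ are pairwise disjoint, one may choose the $U_i$ pairwise disjoint, and then $\partial U$ is strictly pseudoconvex as well.

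I would next choose a smooth hyperplane section $A\in\linsys{lH}$ meeting every $C_{ij}$ transversely, a base point $p\in A\cap D$, and a small open neighborhood $V_p$ of $p$ in $A$. The Abel--Jacobi calculation of Lemma~\ref{lemma:tubular-neighborhood} goes through word for word and produces a smooth irreducible $A'\in\linsys{nA}$ meeting $A$ and every $C_{ij}$ transversely with $A\cap A'\subset V_p$. After shrinking $U$ around $D$ one may assume $V_p\subset U$, hence $A\cap A'\subset U$; the analogous inclusion $A\cap A'\subset U_i$ needed for the single-chain statement is obtained by placing the base point $p$ on the specific $D_i$.

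With these choices, set $Z_0=Z\setminus(A\cup U)$ and $Z_0'=Z\setminus(A'\cup U)$; the inclusion $A\cap A'\subset U$ shows that $Y=Z_0\cup Z_0'$. Since $A$ and $A'$ are ample, the complements $Z\setminus A$ and $Z\setminus A'$ are affine, and deleting the closure of the strictly pseudoconvex open set $U$ preserves Stein-ness, so $Z_0$, $Z_0'$ and $Z_0\cap Z_0'$ are all Stein. The Mayer--Vietoris sequence
\[
H^1(Z_0\cap Z_0',\mathcal{T}_Y)\to H^2(Y,\mathcal{T}_Y)\to H^2(Z_0,\mathcal{T}_Y)\oplus H^2(Z_0',\mathcal{T}_Y)
\]
combined with Cartan's Theorem~B then forces $H^2(Y,\mathcal{T}_Y)=0$, and the identical argument with $U_i$ in place of $U$ yields $H^2(Y_i,\mathcal{T}_{Y_i})=0$.

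The main place where the argument departs from Lemma~\ref{lemma:tubular-neighborhood}, and the step I expect to be the real obstacle, is the Stein-ness of $Z_0$ and $Z_0'$. In the lemma this follows from the triviality of any holomorphic line bundle over an open Riemann surface, which turns the punctured disk bundle into a product and delivers pseudoconvexity for free; for a reducible chain that local model is unavailable. The substitute is to invoke the rationality hypothesis at the contracted singular point $f(D_i)$ and apply Grauert's theorem to obtain the strongly pseudoconvex tubular neighborhood $U_i$. Once such a $U_i$ has been fixed, the remainder of the proof is a routine extension of the lemma.
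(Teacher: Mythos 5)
There is a genuine gap, and it sits exactly at the step you flagged: the principle ``deleting the closure of a strictly pseudoconvex open set preserves Stein-ness'' is false, and in fact points in the wrong direction here. Grauert's theorem gives a neighborhood $U$ whose boundary is strictly pseudoconvex \emph{as seen from inside $U$}; the same hypersurface is then strictly pseudoconcave as seen from the complement. The model case already kills the claim: for $C$ a $(-1)$-curve, a Grauert neighborhood is the preimage of a ball under the blow-down, and the complement looks like $\mathbb{C}^2 \setminus \overline{B}$, which is not Stein (every holomorphic function extends across the concave boundary by Hartogs). More precisely, at any point of $\partial U \setminus A$ your $Z_0 = Z \setminus (A \cup \overline{U})$ fails to be locally Stein, and since $Z_0$ sits inside the Stein manifold $Z \setminus A$, the Docquier--Grauert criterion shows $Z_0$ is not Stein. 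Note that the mechanism in Lemma~\ref{lemma:tubular-neighborhood} is the \emph{opposite} of Grauert's convexity: there the Stein-ness of $Z_0$ comes from the fact that the ample section $A$ punctures the curve $C$, so the punctured disk bundle over the open Riemann surface $C \setminus A$ trivializes to a product $S \times B$, whose boundary is pseudoconvex \emph{from the outside} (essentially Levi-flat); this is why condition (iii), that a component of $V \cap A$ be a fiber of $V$, appears in the lemma. Your proposed substitute discards precisely this trivialization and replaces it with convexity of the wrong sign, so the Mayer--Vietoris step that follows has no Stein pieces to feed on.

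The paper's own proof avoids all of this: it never re-runs the lemma for the whole chain. Instead it applies Lemma~\ref{lemma:tubular-neighborhood} to each irreducible component $C_{ij}$ separately (each is a smooth rational curve with $C_{ij}^2 \le -2$, so the lemma applies verbatim), sets $U_i = \cup_{j} W_{ij}$ --- so the ``tubular neighborhood'' of $D_i$ is a plumbing, not a Grauert neighborhood --- and obtains $Y_i = \cap_j Y_{ij}$ and $Y = \cap_i Y_i$ as finite intersections of open sets each with vanishing $H^2$ of the tangent sheaf. The conclusion then follows from Mayer--Vietoris and induction, using vanishing of coherent cohomology above the complex dimension to control the $H^3$ term of the unions. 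In particular the rational-singularity/contractibility hypothesis you invoke is never needed for this theorem (it is used later, in Lemma~\ref{lemma:E-and-L}). If you want to salvage your approach, you would have to build $U_i$ out of the single-curve neighborhoods of the lemma anyway --- at which point you have reproduced the paper's argument --- so the per-component reduction is not merely simpler but appears essential.
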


\begin{proof}
 By Lemma~\ref{lemma:tubular-neighborhood} there is a holomorphic neighborhood $W_{ij}$ of $C_{ij}$ such that $H^2(Y_{ij}, \sheaf{T_{Y_{ij}}})=0$ for each $j=1,\dotsc,m_i$ where $Y_{ij} = Z \setminus W_{ij}$. Set $U_i = \cup_{j=1}^{m_i} W_{ij}$ and $Y_i = Z-U_i = \cap_{j=1}^{m_i} Y_{ij}$. By the Mayer-Vietoris sequence and the induction on $m_i$ it is easy to show that $H^2(Y_i, \sheaf{T_{Y_i}})=0$. Since $Y=\cup_{i=1}^{k}{Y_i}$ it follows that $H^2(Y, \sheaf{T_Y})=0$.
\end{proof}

\begin{remark}
Since $Y = Z \setminus U$ is not compact, the vanishing does not imply that a infinitesimal deformation on $Z \setminus U$ is automatically integrable.
\end{remark}

\section{Infinitesimal deformations}
\label{section:infinitesimal-deformations}

Let $H$ be an effective ample divisor on $Z$. We construct infinitesimal deformations of the open surface $Y=Z \setminus U$ derived from $H$; Proposition~\ref{theorem:infinitesimal_deformation}.

\begin{lemma}\label{lemma:E-and-L}
There exists an effective divisor $L = x_0 H + E$ where
    \[E=\sum_{j=1}^{m_1}{x_{1j} C_{1j}} + \dotsb + \sum_{j=1}^{m_k}{x_{kj} C_{kj}}\]
satisfying the following properties:
    \begin{enumerate}[\rm (i)]
    \item $x_{ij} \ge 1$ for all $i$, $j$,

    \item $LC_{ij}=0$ for all $i$, $j$,

    \item $L$ descends to an ample divisor on $X$.
    \end{enumerate}
Furthermore a tuple of coefficients $(x_0, x_{11}, \dotsc, x_{km_k})$ is uniquely determined by $H$ up to a constant multiple.
\end{lemma}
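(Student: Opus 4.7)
The plan is to reduce the problem to a decoupled system of linear equations in the $x_{ij}$ with $x_0$ as a parameter, and then verify the positivity and the ampleness conditions separately.

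First, I would observe that condition (ii) decouples across the chains, since the supports of the $D_i$ are disjoint. For a fixed $x_0$, condition (ii) restricted to $D_i$ becomes
\[
M_i \, (x_{i1},\dotsc,x_{im_i})^{T} = -x_0\,(H\cdot C_{i1},\dotsc,H\cdot C_{im_i})^{T},
\]
where $M_i = (C_{il}\cdot C_{ij})_{l,j}$ is the intersection matrix of $D_i$. Because $D_i$ contracts to a rational singularity, $M_i$ is negative definite and hence invertible, so the system has a unique solution $(x_{i1},\dotsc,x_{im_i})$ for each value of $x_0$. This immediately yields the final uniqueness statement: the tuple $(x_0,x_{11},\dotsc,x_{km_k})$ is determined up to the choice of the free parameter $x_0$.

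Next, to establish (i), I would use the classical fact that $-M_i$ is an $M$-matrix (positive definite with non-positive off-diagonal entries), so $(-M_i)^{-1}$ has strictly positive entries; this is the same input underlying Artin's construction of the fundamental cycle of a rational singularity. Since $H$ is ample, every $H\cdot C_{ij}$ is positive, so the solution above has all $x_{ij}>0$ when $x_0>0$, and rescaling the whole tuple by a positive constant lets me assume $x_{ij}\ge 1$ for every $i,j$.

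For (iii), I would use $L\cdot C_{ij}=0$ together with the rationality of the singularities of $X$ to conclude that a suitable multiple of $L$ is the pullback $f^{*}L_X$ of a Cartier divisor $L_X$ on $X$. I would then check ampleness of $L_X$ via Nakai--Moishezon. Since $E$ is a $\mathbb{Q}$-combination of the $C_{ij}$, one has $L\cdot E=0$, so
\[
L^{2} \;=\; x_0\,L\cdot H \;=\; x_0\bigl(x_0 H^{2}+H\cdot E\bigr) \;>\;0,
\]
using $H^2>0$ and $H\cdot E>0$. For any irreducible curve $B_X\subset X$ with strict transform $\tilde B\subset Z$, the projection formula combined with $L\cdot C_{ij}=0$ gives $L_X\cdot B_X = L\cdot \tilde B = x_0 H\cdot \tilde B + E\cdot \tilde B>0$, because $H$ is ample and $\tilde B$ is not a component of $E$. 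Hence $L_X$ is ample on $X$.

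The main obstacle is really just invoking the correct inputs in the right order: the negative-definiteness of $M_i$, the positivity of the entries of $(-M_i)^{-1}$ for an $M$-matrix, and the fact that a divisor with zero intersection against every exceptional curve of a rational singularity descends to a $\mathbb{Q}$-Cartier divisor on the contraction. All three are classical, so once they are in place the proof is a short piece of linear algebra together with a Nakai--Moishezon check, with no serious computation required.
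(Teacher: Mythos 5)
Your proposal is correct, and its linear-algebra half is essentially the paper's own argument: the paper encodes conditions (i)--(ii) as a single homogeneous system $BX=0$ whose $m\times(m+1)$ coefficient matrix has full rank, which gives the uniqueness-up-to-scalar statement, and it asserts the existence of a positive integral solution directly from $a_i, b_i>0$; your per-chain decoupling together with the Stieltjes/M-matrix fact that $(-M_i)^{-1}$ has strictly positive entries is the same computation made structural, and is if anything more complete than the paper's ``it is easy to show'' (one small point: rescaling must also clear denominators so that the $x_{ij}$ are positive \emph{integers}, not merely $\ge 1$; this is harmless since the entries of $(-M_i)^{-1}$ are rational). Where you genuinely diverge is in (iii). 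The paper, after establishing---exactly as you do---that rationality forces $nL$ to be trivial on a neighborhood $U$ of $D$ (via $H^1(U,\sheaf{O_U})=H^2(U,\sheaf{O_U})=0$ and $\picard(U)\cong H^2(U,\mathbb{Z})$), verifies ampleness by hand: it chooses $s_0\in H^0(Z,nL)$ restricting to a nonzero constant on $D$, together with sections $s_1,\dotsc,s_l$ coming from $|nx_0H|$ that embed $Z$, and checks that the resulting map $\pi=(s_0,s_1,\dotsc,s_l)$ contracts $D$ and embeds $Z\setminus D$, so a multiple of $L$ is very ample on $X$. You instead descend $L$ and invoke Nakai--Moishezon. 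That route is legitimate, but be aware of what the paper's construction buys: the explicit embedding proves in passing that $X$ is a projective scheme, whereas Nakai--Moishezon presupposes that $X$ is at least a proper scheme to which the criterion applies. Since projectivity of $X$ is close to being the content of (iii), you should explicitly cite Artin's contractibility theorem for rational singularities (the contraction exists in the category of schemes, and $X$ is complete as the image of $Z$) together with Kleiman's proper-scheme version of the Nakai--Moishezon criterion; with those references in place your argument is complete, shorter than the paper's, and avoids the section-by-section embedding check at the cost of heavier classical inputs.
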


\begin{proof}
For simplicity we first consider the case $k=1$. We denote $C_{1j}$ by $C_j$, $b_{1j}$ by $b_j$, and $m_1$ by $m$. Set $a_j = HC_j > 0$ for $j=1, \dotsc, m$. If $m=1$, set $L = b_1H + a_1C_1$. If $m=2$, set $L = (b_1b_2-1)H + (a_1b_2+a_2)C_1 + (a_1+a_2b_1)C_2$. For $m \ge 3$, the condition (i) would be interpreted as the system of linear equations
    \begin{equation}\label{equation:H}
    BX=0
    \end{equation}
where
    \begin{equation*}
    B=\begin{pmatrix}
    -b_1 & 1    &        &          &          & a_1     \\
    1    & -b_2 & 1      &          &          & a_2     \\
         &      & \ddots &          &          & \vdots  \\
         &      & 1      & -b_{m-1} & 1        & a_{m-1} \\
         &      &        & 1        & -b_m     & a_{m}
    \end{pmatrix}, \quad
    X=\begin{pmatrix}
    x_1 \\ x_2 \\ \vdots \\ x_m \\ x_0
    \end{pmatrix}.
    \end{equation*}
Since $B$ is of full rank, the solution of \eqref{equation:H} is unique up to constant multiple. Since $a_i, b_i > 0$, it is easy to show that there are positive integers $x_0, \dotsc, x_m$ satisfying \eqref{equation:H}. Therefore there is an effective divisor $L = x_0 H + x_1 C_1 + \dotsb + x_m C_m$ such that $LC_i=0$ for all $i=1,\dotsc,m$.

We now show that $L$ descends to an ample divisor on $X$. Let $E = x_1 C_1 + \dotsb + x_m C_m$. Consider the exact sequence
    \begin{equation*}
    H^0(Z, nx_0H) \xrightarrow{\phi} H^0(Z, nL) \xrightarrow{\psi} H^0(nE, nL|_{nE}) \to H^1(Z, nx_0H).
    \end{equation*}
Since $H$ is ample, $H^1(Z, nx_0H)=0$ for $n \gg 0$; hence $\psi$ is surjective for $n \gg 0$.

On the other hand, setting $V=f(U)$ and $p=f(D)$, $(U, D) \to (V, p)$ is the minimal resolution of the rational singularity $p$ of $X$. We may assume that $V \subset X$ is Stein and contractible by shrinking $U$ if necessary. Then $H^1(U, \sheaf{O_U})=H^2(U, \sheaf{O_U})=0$ and $D$ is a deformation retract of $U$, in particular the exponential sequence on $U$ gives an isomorphism $\picard(U) = H^2(U, \mathbb{Z})$. Therefore a line bundle $\sheaf{L}$ on $U$ is trivial if and only if $\sheaf{L} \cdot C_i=0$ for every irreducible component $C_i$ of $D$. Therefore the restriction of $L$ on $U$ is trivial and $nL$ is also trivial on $U$. In particular, the restriction map $H^0(nE, nL|_{nE}) \to H^0(D, nL|_D)$ is surjective.

Since $nL|_D$ is trivial, we can choose $s_0 \in H^0(Z, nL)$ such that $s_0|_D$ is a nonzero constant. On the other hand, since $H$ is ample, we may choose  $\{\widetilde{s}_1, \dotsc, \widetilde{s}_l\} \in H^0(Z, nx_0H)$ so that they gives an embedding $Z \hookrightarrow \mathbb{P}^{l-1}$ for $n \gg 0$. Set $s_i = \phi(s_i)$ ($i=1,\dotsc,l$). Since $s_i|_D=0$ for $i=1,\dotsc,l$, the map $\pi : Z \to \mathbb{P}^l$ defined by $\pi(x) = (s_0(x),s_1(x),\dotsc,s_l(x))$ contracts $D$ and gives an embedding of $Z \setminus D$; hence, the map $\pi$ is an embedding of $X$, which implies that $L$ descends to an ample divisor on $X$.

In case of $k \ge 2$, one may consider the following equation instead of \eqref{equation:H}: Setting $a_{ij} = HC_{ij} > 0$,
    \begin{equation*}
    \begin{pmatrix}
    B_1     &         &               &          & A_1\\
                & B_2 &               &          & A_2\\
                &         & \ddots &          & \vdots \\
                &         &                & B_k & A_k
    \end{pmatrix} X = 0
    \end{equation*}
where
    \begin{equation*}
    B_i=\begin{pmatrix}
    -b_{i1} & 1    &        &          &     \\
    1    & -b_{i2} & 1      &          &  \\
         &      & \ddots &          &       \\
         &      & 1      & -b_{i(m_i-1)} & 1 \\
         &      &        & 1        & -b_{m_i}
    \end{pmatrix},
    A_i=\begin{pmatrix}
    a_{i1} \\ a_{i2} \\ \vdots \\ a_{i(m_i-1)} \\ a_{im_i}
    \end{pmatrix},
    X=\begin{pmatrix}
    x_{11} \\ x_{12} \\ \vdots \\ x_{km_k} \\ x_0
    \end{pmatrix}.
    \end{equation*}
Then the proof is identical to the proof of the case $k=1$.
\end{proof}

Our infinitesimal deformations of $Y$ are obtained by restricting to $Y$ meromorphic $1$-forms on $Z$ with poles along $C_{ij}$. In order to construct such infinitesimal deformations we use a similar strategy in Takamura~\cite{Takamura-1996}.

\begin{theorem}\label{theorem:infinitesimal_deformation}
Let $E$ be the effective divisor in Lemma~\ref{lemma:E-and-L} corresponding to the ample divisor $H$. Then the restriction map $H^1(Z, \sheaf{T_Z}(nE)) \to H^1(Y, \sheaf{T_Y})$ is injective for any $n \gg 1$.
\end{theorem}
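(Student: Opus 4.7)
The plan is to prove injectivity by a direct \v{C}ech extension argument, modeled on Takamura's treatment of a single negative curve~\cite{Takamura-1996}. Choose a Stein open cover $\mathcal{U}=\{U_\alpha\}$ of $Z$ compatible with $D=\bigcup_{i,j}C_{ij}$: each $U_\alpha$ is either contained in $Y$, a tubular slice about a smooth point of a single component $C_{ij}$ (with fiber coordinate $z$ of the holomorphic tubular neighborhood), or a polydisk about a node $p=C_{ij}\cap C_{i,j+1}$ (with coordinates $(z_1,z_2)$ realizing the product plumbing structure). I take the $U_\alpha$'s ``fat'' enough that each meets $Y$ non-trivially, and arrange so that tubular slices of different components meet only through a node chart.

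Let $\theta=\{\theta_{\alpha\beta}\}\in\check{Z}^1(\mathcal{U},\sheaf{T_Z}(nE))$ be a $1$-cocycle whose image in $\check{Z}^1(\mathcal{U}|_Y,\sheaf{T_Y})$ is a coboundary: $\theta_{\alpha\beta}|_Y=\eta_\beta-\eta_\alpha$ for some $\eta_\alpha\in\Gamma(U_\alpha\cap Y,\sheaf{T_Z})$. In each non-$Y$ chart I would produce an extension $\tilde\eta_\alpha\in\Gamma(U_\alpha,\sheaf{T_Z}(nE))$ by Laurent truncation. In a tubular slice along $C_{ij}$, set $\tilde\eta_\alpha$ equal to the truncation of $\eta_\alpha$ keeping only Laurent terms $z^k$ with $k\ge-nx_{ij}$; in a node chart, truncate the double Laurent expansion at $k_1\ge-nx_{i,j+1}$ and $k_2\ge-nx_{ij}$. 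Since the transition functions of a holomorphic tubular neighborhood rescale the fiber coordinate by a unit, the truncation is coordinate-invariant, so $\tilde\eta_\alpha$ is canonically well-defined. The identity $\theta_{\alpha\beta}=\tilde\eta_\beta-\tilde\eta_\alpha$ holds on $U_{\alpha\beta}\cap Y$ by matching Laurent coefficients (the low-pole part of $\eta_\beta-\eta_\alpha$ equals $\theta_{\alpha\beta}$, because $\theta_{\alpha\beta}$ itself has pole order $\le nx_{ij}$ by construction), and then extends to all of $U_{\alpha\beta}$ by analytic continuation since $U_{\alpha\beta}\cap Y$ is non-empty. This realizes $[\theta]=0$ in $H^1(Z,\sheaf{T_Z}(nE))$.

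The main obstacle is the consistency of the truncations at the nodes of $D$, where the pole-order cutoffs in the two coordinate directions must be compatible with those coming from the adjacent tubular slices of the two incident components. This is where the specific multiplicities $x_{ij}$ from Lemma~\ref{lemma:E-and-L}, uniquely determined up to scale by the relations $L\cdot C_{ij}=0$, play a crucial role: they are precisely the linear combination that aligns the cutoffs across the whole plumbing. The hypothesis $n\gg 1$ enters to guarantee that the cutoffs $nx_{ij}$ are large enough to absorb all low-order Laurent obstructions coming from the extension $\sheaf{T_Z}|_{C_{ij}}$ of $\mathcal{O}(-b_{ij})$ by $\mathcal{O}(2)$.
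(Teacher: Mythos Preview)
Your truncation argument has a genuine gap precisely at the nodes, and the multiplicities $x_{ij}$ do not close it. Consider a node chart $U_\beta$ with coordinates $(z_1,z_2)$ and an adjacent tubular slice $U_\alpha$ of $C_{ij}$ with fiber coordinate $z\sim z_2$. On $U_\beta\cap Y$ the cochain $\eta_\beta$ may contain a term like $c\,z_1^{-M}z_2^{0}$ with $M>nx_{i,j+1}$; your node-chart truncation discards it, but on the overlap $U_\alpha\cap U_\beta$ (where $z_1\ne 0$) this term is holomorphic in $z$ and hence is \emph{kept} by the single-variable truncation defining $\tilde\eta_\alpha$. So $\tilde\eta_\beta-\tilde\eta_\alpha$ differs from $\theta_{\alpha\beta}$ by the contribution of all such terms, and the coboundary relation is lost. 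The relations $L\cdot C_{ij}=0$ are intersection-theoretic constraints on the numbers $x_{ij}$; they say nothing about aligning a double Laurent cutoff with a single one, which is the actual obstruction here. Note also that your argument never uses the ampleness of $H$ in any essential way, and your explanation of why $n\gg 1$ is needed (``absorbing low-order Laurent obstructions coming from $\sheaf{T_Z}|_{C_{ij}}$'') does not connect to any step of the construction; both are warning signs that the mechanism is not the right one.

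For contrast, the paper's proof is short and global: since $L=x_0H+E$ descends to an ample divisor on $X$, one chooses a smooth irreducible curve $C\in|nL|$ disjoint from $\supp(E)$ (hence $C\subset Y$). The restriction map $\alpha:H^1(Z,\sheaf{T_Z}(nE))\to H^1(C,\sheaf{T_Z}(nE)\otimes\sheaf{O_C})$ then factors through $H^1(Y,\sheaf{T_Y})$ because $\sheaf{T_Z}(nE)|_Y\cong\sheaf{T_Y}$ and $C\subset Y$. Injectivity of $\alpha$ follows from Kodaira--Nakano vanishing: $H^1(Z,\sheaf{T_Z}(nE-C))\cong H^1(Z,\Omega_Z(K_Z+nx_0H))=0$ for $n\gg 0$. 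Thus the ampleness of $H$ and the choice of $n\gg 0$ enter through a vanishing theorem, not through any local bookkeeping; this is where your approach and the paper's diverge fundamentally.
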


\begin{proof}
The divisor $L = x_0H + E$ descends to an ample divisor on $X$ by Lemma~\ref{lemma:E-and-L}. Let $\overline{L} = L|X$. Since $\overline{L}$ is ample, we may choose an irreducible smooth curve $C \in \linsys{n\overline{L}}$ for $n \gg 0$ such that $C$ does not pass through the singular point of $X$. We denote again by $C$ the inverse image of $C$ under the contraction map $f$. Then $C$ is linearly equivalent to $nL$ and $C \cap \supp(E) = \varnothing$.

Consider the exact sequence
    \begin{equation*}
    H^1(Z, \sheaf{T_Z}(nE-C)) \to H^1(Z, \sheaf{T_Z}(nE)) \xrightarrow{\alpha} H^1(C, \sheaf{T_Z}(nE) \otimes \sheaf{O_{C}}).
    \end{equation*}
Since $C-nE=nH$ is ample, we have
    \begin{equation*}
    \begin{split}
    H^1(Z, \sheaf{T_Z}(nE-C)) &= H^1(Z, \Omega_Z(K_Z + C - nE)) \\
    &=H^1(Z, \Omega_Z(K_Z + nH)) = 0.
    \end{split}
    \end{equation*}
Therefore the map $\alpha$ is injective. On the other hand, since $C \cap \supp(E) = \varnothing$, the map $\alpha$ factors through $H^1(Y, \sheaf{T_Y})$, i.e.,
    \begin{equation*}
    \alpha:  H^1(Z, \sheaf{T_Z}(nE)) \to H^1(Y, \sheaf{T_Y}) \to H^1(C, \sheaf{T_Z}(nE) \otimes \sheaf{O_{C}}).
    \end{equation*}
Therefore the restriction map $H^1(Z, \sheaf{T_Z}(nE)) \to H^1(Y, \sheaf{T_Y})$ is injective for any $n \gg 1$.
\end{proof}

\begin{remark}[Integrability]
Since $Y$ is not compact the vanishing of $H^2(Y, \sheaf{T_Y})$ does not necessarily imply the integrability of the infinitesimal deformations induced by $H^1(Z, \sheaf{T_Z}(nE))$. This question needs further investigation.
\end{remark}

The infinitesimal deformation $H^1(Z, \sheaf{T_Z}(nE))$ is nonempty; Proposition~\ref{proposition:grows_quadrically}. We use a similar method in Takamura~\cite{Takamura-2000}.

\begin{lemma}\label{lemma:h^1(T_Z(nE)|nE)}
The dimension $h^1(nE, \sheaf{T_Z}(nE) \otimes \sheaf{O_{nE}})$ grows at least quadratically in $n \gg 0$.
\end{lemma}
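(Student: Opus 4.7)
The plan is to filter $nE$ by its successive infinitesimal thickenings $jE \subset (j+1)E$, compute the Euler characteristic of each graded piece via Hirzebruch--Riemann--Roch on the ambient surface $Z$, and then convert the resulting quadratic expression in $n$ into a lower bound on $h^1$ using the fact that $nE$ is one-dimensional.

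More concretely, for each $j \ge 0$ the standard sequence $0 \to \sheaf{O_E}(-jE) \to \sheaf{O_{(j+1)E}} \to \sheaf{O_{jE}} \to 0$ (coming from $\sheaf{O_Z}(-jE)/\sheaf{O_Z}(-(j+1)E) \cong \sheaf{O_E}(-jE)$), tensored with the locally free sheaf $\sheaf{T_Z}(nE)$, gives an exact sequence
\begin{equation*}
0 \to \sheaf{T_Z}((n-j)E)|_E \to \sheaf{T_Z}(nE)|_{(j+1)E} \to \sheaf{T_Z}(nE)|_{jE} \to 0.
\end{equation*}
Additivity of $\chi$ and induction on $j$ yield
\begin{equation*}
\chi(nE, \sheaf{T_Z}(nE)|_{nE}) = \sum_{k=1}^{n} \chi(E, \sheaf{T_Z}(kE)|_E).
\end{equation*}
Each summand on the right equals $\chi(Z, \sheaf{T_Z}(kE)) - \chi(Z, \sheaf{T_Z}((k-1)E))$, by the sequence $0 \to \sheaf{T_Z}((k-1)E) \to \sheaf{T_Z}(kE) \to \sheaf{T_Z}(kE)|_E \to 0$ on $Z$. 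Applying Hirzebruch--Riemann--Roch on the surface, with $c_1(\sheaf{T_Z}(kE)) = -K_Z + 2kE$ and $c_2(\sheaf{T_Z}(kE)) = c_2(\sheaf{T_Z}) - kE \cdot K_Z + k^2 E^2$, produces $\chi(Z, \sheaf{T_Z}(kE)) = k^2 E^2 - 2k\, E \cdot K_Z + (\text{constant in } k)$. Telescoping and summing then gives
\begin{equation*}
\chi(nE, \sheaf{T_Z}(nE)|_{nE}) = n^2 E^2 - 2n\, E \cdot K_Z.
\end{equation*}

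The essential geometric input is $E^2 < 0$. Since each $D_i$ contracts to a rational singularity, the intersection form on $\mathbb{Q}\langle C_{ij}\rangle$ is negative definite, and distinct chains $D_i, D_{i'}$ are disjoint; hence the positive combination $E = \sum x_{ij} C_{ij}$ with $x_{ij} \ge 1$ satisfies $E^2 < 0$. Equivalently, Lemma~\ref{lemma:E-and-L} gives $E \cdot C_{ij} = -x_0\, H \cdot C_{ij} < 0$, and then $E^2 = \sum x_{ij}(E \cdot C_{ij}) < 0$.

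Finally, since $nE$ is one-dimensional, $H^i(nE, \sheaf{F}) = 0$ for $i \ge 2$ and any coherent $\sheaf{F}$, so $h^1 = h^0 - \chi \ge -\chi$. Therefore
\begin{equation*}
h^1(nE, \sheaf{T_Z}(nE)|_{nE}) \ge -n^2 E^2 + 2n\, E \cdot K_Z,
\end{equation*}
which grows at least quadratically in $n$ because $-E^2 > 0$. The only mildly delicate step is the bookkeeping in the Riemann--Roch calculation; no serious obstacle is anticipated, as everything reduces to negative definiteness of the contracted configuration.
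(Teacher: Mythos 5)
Your proof is correct, and it takes a genuinely different route from the paper's. The paper peels off one reduced component at a time: it restricts to a single curve $C_1$ via decomposition sequences, uses the tangent--normal bundle sequence on the rational curve together with explicit degree estimates to check that $h^0(\sheaf{T_Z}(nE-lC)|_C)=0$ in the relevant range of $l$ (so that $h^1$ is literally additive along the filtration), and then sums the exactly computed $h^1$'s, with a case division $m=1$ versus $m\ge 2$. You instead work with Euler characteristics, which are additive with no vanishing hypotheses; note that your filtration by multiples $jE$ is actually an unnecessary detour, since the single sequence $0 \to \sheaf{T_Z} \to \sheaf{T_Z}(nE) \to \sheaf{T_Z}(nE)\otimes\sheaf{O_{nE}} \to 0$ (the very sequence the paper uses in Proposition~\ref{proposition:grows_quadrically}) gives $\chi(nE,\sheaf{T_Z}(nE)\otimes\sheaf{O_{nE}}) = \chi(Z,\sheaf{T_Z}(nE)) - \chi(Z,\sheaf{T_Z})$ directly, and your Riemann--Roch bookkeeping correctly evaluates this as $n^2E^2 - 2nE\cdot K_Z$. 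The conclusion $h^1 \ge -\chi$ is valid because $nE$ is one-dimensional, and your verification that $E^2<0$ is sound on both counts: negative definiteness follows from contractibility, and the self-contained computation $E\cdot C_{ij} = -x_0\, H\cdot C_{ij} < 0$, hence $E^2 = \sum x_{ij}(E\cdot C_{ij}) < 0$, uses only Lemma~\ref{lemma:E-and-L}. Your argument is shorter, avoids the case analysis entirely, and works for any cycle supported on a negative definite configuration; what it buys less of is the auxiliary vanishing $h^0(\sheaf{T_Z}(nE-lC_1)|_{C_1})=0$, which the paper's explicit computation establishes en route and then reuses in the proof of Lemma~\ref{lemma:H^0(T_Z(nE-lC_1)|C_1)=0} --- so in the paper's overall economy the component-by-component calculation does double duty, while for the lemma as stated your Euler-characteristic bound fully suffices.
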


\begin{proof}
Since the supports of the divisors $D_i$ are disjoint we may assume that $k=1$. For simplicity we denote  $C_{1j}$ by $C_j$, $b_{1j}$ by $b_j$, $m_1$ by $m$, and $x_{1j}$ by $x_j$. We divide the proof into two cases.

Case 1: $m=1$. Let $C=C_1$, $b=b_1$, and $x=x_1$ for brevity. We first claim that $h^0(\sheaf{T_Z}(nE-lC)|_C)=0$ and $h^1(\sheaf{T_Z}(nE-lC)|_C)=2nx-2lb+b$ for $0 \le l \le nx$. Consider the exact sequence
    \[0 \to \sheaf{T_C}(nE-lC) \to \sheaf{T_Z}(nE-lC)|_C \to \sheaf{N_{C,Z}}(nE-lC) \to 0\]
induced from the tangent-normal bundle sequence. Since
    \begin{align*}
    \deg{\sheaf{T_C}(nE-lC)} &= -2-nxb+lb < 0,\\
    \deg{\sheaf{N_{C,Z}}(nE-lC)} &= -b-nxb+lb < 0
    \end{align*}
for $0 \le l \le nx$, it follows from the above exact sequence and the Riemann-Roch theorem that  $h^0(\sheaf{T_Z}(nE-lC)|_C)=0$ and  $h^1(\sheaf{T_Z}(nE-lC)|_C)=2nx-2lb+b$.

Consider the decomposition sequence
    \[0 \to \sheaf{T_Z}(nE-(l+1)C)|_{(nx-l-1)C} \to \sheaf{T_Z}(nE-lC)|_{(nx-l)C} \to \sheaf{T_Z}(nE-lC)|_C \to 0.\]
for $0 \le l \le nx_1-2$. Since $h^0(\sheaf{T_Z}(nE-lC)|_C)=0$, we have
    \[h^1(\sheaf{T_Z}(nE-lC)|_{(nx-l)C}) = h^1(\sheaf{T_Z}(nE-lC)|_C) + h^1(\sheaf{T_Z}(nE-(l+1)C)|_{(nx-l-1)C}).\]
Therefore
    \begin{equation*}
    \begin{split}
    h^1(\sheaf{T_Z}(nE)|_{nxC}) &= h^1(\sheaf{T_Z}(nE)|_C) + h^1(\sheaf{T_Z}(nE-C)|_{(nx-1)C}) \\
    &=\cdots\\
    &= \sum_{l=0}^{nx-1} h^1(\sheaf{T_Z}(nE-lC)|_C) \\
    &= \sum_{l=0}^{nx-1} (2nx-2lb+b) \\
    &\sim O(n^2).
    \end{split}
    \end{equation*}

Case 2: $m \ge 2$. The proof is similar to the case $m=1$. For the convenience of the reader, we briefly sketch the proof. We claim that $h^0(\sheaf{T_Z}(nE-lC)|_C)=0$ and $h^1(\sheaf{T_Z}(nE-lC)|_C)=2na_1x_0-2lb_1+b_1$ for $0 \le l \le \left[ \frac{na_1x_0+2}{b_1}\right] -1$. Set $\alpha = \left[ \frac{na_1x_0+2}{b_1}\right]$. Consider the exact sequence
    \[0 \to \sheaf{T_C}(nE-lC) \to \sheaf{T_Z}(nE-lC)|_C \to \sheaf{N_{C,Z}}(nE-lC) \to 0.\]
Since $-x_1b_1 + x_2 = -a_1x_0$ by \eqref{equation:H}, we have
    \begin{align*}
    \deg{\sheaf{T_C}(nE-lC)} &= -2 - nx_1b_1 + nx_2 + lb_1 = -2 - na_1x_0 + lb_1 < 0,\\
    \deg{\sheaf{N_{C,Z}}(nE-lC)} &= -b_1 - nx_1b_1 + nx_2 + lb_1 = -b_1 - na_1x_0 + lb_1 < 0
    \end{align*}
for $0 \le l \le \alpha-2$. Therefore $h^0(\sheaf{T_Z}(nE-lC)|_C)=0$. By using $\sheaf{T_Z}|_C = \sheaf{T_C} \oplus \sheaf{N_{C,Z}}$ and the Riemann-Roch, we have $h^1(\sheaf{T_Z}(nE-lC)|_C)=2na_1x_0-2lb_1+b_1$. Hence the claim follows.

From the decomposition sequence
    \[0 \to \sheaf{T_Z}(nE-(l+1)C)|_{(nx-l-1)C} \to \sheaf{T_Z}(nE-lC)|_{(nx-l)C} \to \sheaf{T_Z}(nE-lC)|_C \to 0\]
and the above claim, we have
    \[h^1(\sheaf{T_Z}(nE-lC)|_{(nx-l)C}) = h^1(\sheaf{T_Z}(nE-lC)|_C) + h^1(\sheaf{T_Z}(nE-(l+1)C)|_{(nx-l-1)C})\]
for $0 \le l \le \alpha -1$. Therefore
    \begin{equation*}
    \begin{split}
    h^1(\sheaf{T_Z}(nE)|_{nxC}) &= h^1(\sheaf{T_Z}(nE)|_C) + h^1(\sheaf{T_Z}(nE-C)|_{(nx-1)C})\\
    &=\cdots\\
    &= \sum_{l=0}^{\alpha-1} h^1(\sheaf{T_Z}(nE-lC)|_C) + h^0(\sheaf{T_Z}(nE-\alpha C_1)|_{(nx_1-\alpha)C_1})\\
    &= \sum_{l=0}^{\alpha-1}(2na_1x_0-2lb_1+b_1) + h^0(\sheaf{T_Z}(nE-\alpha C_1)|_{(nx_1-\alpha)C_1})\\
    &\sim O(n^2)
    \end{split}
    \end{equation*}
if $n \gg 0$ because $\alpha \sim O(n)$ for $n \gg 0$.
\end{proof}

\begin{proposition}\label{proposition:grows_quadrically}
The dimension $h^1(Z, \sheaf{T_Z}(nE))$ grows at least quadratically in $n \gg 0$.
\end{proposition}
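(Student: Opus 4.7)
The plan is to deduce the lower bound on $h^1(Z, \sheaf{T_Z}(nE))$ from the quadratic growth of $h^1(nE, \sheaf{T_Z}(nE) \otimes \sheaf{O_{nE}})$ established in Lemma~\ref{lemma:h^1(T_Z(nE)|nE)} by a standard restriction-to-thickening argument. First I would tensor the ideal sheaf sequence
\begin{equation*}
0 \to \sheaf{O_Z}(-nE) \to \sheaf{O_Z} \to \sheaf{O_{nE}} \to 0
\end{equation*}
with $\sheaf{T_Z}(nE)$ to obtain the short exact sequence
\begin{equation*}
0 \to \sheaf{T_Z} \to \sheaf{T_Z}(nE) \to \sheaf{T_Z}(nE) \otimes \sheaf{O_{nE}} \to 0.
\end{equation*}

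Passing to the long exact sequence of cohomology gives
\begin{equation*}
H^1(Z,\sheaf{T_Z}) \to H^1(Z, \sheaf{T_Z}(nE)) \xrightarrow{\beta} H^1(nE, \sheaf{T_Z}(nE)\otimes\sheaf{O_{nE}}) \xrightarrow{\delta} H^2(Z, \sheaf{T_Z}).
\end{equation*}
From the exactness at the third term we have $\dim(\image \beta) = \dim \ker \delta \geq h^1(nE,\sheaf{T_Z}(nE)\otimes \sheaf{O_{nE}}) - h^2(Z,\sheaf{T_Z})$, and $h^1(Z, \sheaf{T_Z}(nE)) \geq \dim(\image \beta)$.

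The crucial observation is that $h^2(Z, \sheaf{T_Z})$ is a fixed finite constant independent of $n$ (by Serre duality it equals $h^0(Z, \Omega_Z^1 \otimes K_Z)$, which is finite since $Z$ is a smooth projective surface). Combining this with the quadratic lower bound from Lemma~\ref{lemma:h^1(T_Z(nE)|nE)} we obtain
\begin{equation*}
h^1(Z, \sheaf{T_Z}(nE)) \;\geq\; h^1(nE, \sheaf{T_Z}(nE) \otimes \sheaf{O_{nE}}) - h^2(Z, \sheaf{T_Z}) \;\sim\; O(n^2),
\end{equation*}
which is the desired statement. I do not anticipate a serious obstacle here: the content of the proposition is really in Lemma~\ref{lemma:h^1(T_Z(nE)|nE)}, and the only ingredient that needs to be flagged is the $n$-independent finiteness of $h^2(Z,\sheaf{T_Z})$, so that subtracting it from an $O(n^2)$ quantity still leaves an $O(n^2)$ quantity.
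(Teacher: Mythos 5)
Your proof is correct and takes essentially the same approach as the paper: both pass to the long exact sequence of $0 \to \sheaf{T_Z} \to \sheaf{T_Z}(nE) \to \sheaf{T_Z}(nE) \otimes \sheaf{O_{nE}} \to 0$ and observe that the image of the restriction map in $H^1(nE, \sheaf{T_Z}(nE) \otimes \sheaf{O_{nE}})$ can fall short of the full space by at most the $n$-independent constant $h^2(Z, \sheaf{T_Z})$, so Lemma~\ref{lemma:h^1(T_Z(nE)|nE)} forces quadratic growth. The only cosmetic difference is that the paper phrases the exactness bookkeeping as $h^1(nE, \sheaf{T_Z}(nE) \otimes \sheaf{O_{nE}}) = \dim \image \alpha + \dim \image \beta$ where your version uses the equivalent kernel--image inequality.
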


\begin{proof}
Consider the exact sequence
    \begin{equation*}
    \cdots \to H^1(\sheaf{T_Z}(nE)) \xrightarrow{\alpha} H^1(nE, \sheaf{T_Z}(nE) \otimes \sheaf{O_{nE}}) \xrightarrow{\beta} H^2(Z, \sheaf{T_Z}) \to \cdots
    \end{equation*}
induced from the exact sequence
    \begin{equation*}
    0 \to \sheaf{T_Z} \to \sheaf{T_Z}(nE) \to \sheaf{T_Z}(nE) \otimes \sheaf{O_{nE}} \to 0.
    \end{equation*}
Then we have
    \begin{equation*}
    h^1(nE, \sheaf{T_Z}(nE) \otimes \sheaf{O_{nE}}) = \dim \image{\alpha} + \dim \image{\beta}.
    \end{equation*}
By Lemma~\ref{lemma:h^1(T_Z(nE)|nE)}, the left hand side of the above equation grows quadratically for $n \gg 3$, but $\dim \image{\beta}$ bounds for $h^2(Z, \sheaf{T_Z})$. Hence $\dim \image{\alpha}$ must grows quadratically for $n \gg 0$, which implies that $h^1(Z, \sheaf{T_Z}(nE))$ grows at least quadratically in $n \gg 0$.
\end{proof}

The infinitesimal deformation spaces $H^1(Z, \sheaf{T_Z}(nE))$ ($n \gg 0$) form a stratification of $H^1(Y, \sheaf{T_Y})$; Proposition~\ref{proposition:injective}.

\begin{lemma}\label{lemma:H^0(T_Z(nE-lC_1)|C_1)=0}
For each pair $(i,j)$, we have $H^0(x_{ij}C_{ij}, \sheaf{T_Z}(nE) \otimes \sheaf{O_{x_{ij}C_{ij}}}) = 0$ for every $n \gg 0$.
\end{lemma}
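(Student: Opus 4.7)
The plan is to reduce the statement to a vanishing on the smooth rational curve $C_{ij}$ by a standard filtration argument, and then to apply essentially the same degree computation already carried out in the proof of Lemma~\ref{lemma:h^1(T_Z(nE)|nE)}. Fix a pair $(i,j)$ and for brevity write $C = C_{ij}$, $x = x_{ij}$, $a = a_{ij} = H \cdot C_{ij}$, and $b = b_{ij}$.

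First I would set up the filtration $C \subset 2C \subset \dotsb \subset xC$ via the short exact sequences
\[
0 \to \sheaf{T_Z}(nE - lC)|_C \to \sheaf{T_Z}(nE)|_{(l+1)C} \to \sheaf{T_Z}(nE)|_{lC} \to 0 \qquad (0 \le l \le x-1),
\]
obtained by tensoring $0 \to \sheaf{O_Z}(-lC)|_C \to \sheaf{O_{(l+1)C}} \to \sheaf{O_{lC}} \to 0$ with the locally free sheaf $\sheaf{T_Z}(nE)$. Taking the long exact sequence in cohomology and inducting on $l$, the desired vanishing $H^0(xC, \sheaf{T_Z}(nE) \otimes \sheaf{O_{xC}}) = 0$ reduces to showing that $H^0(C, \sheaf{T_Z}(nE - lC)|_C) = 0$ for each $0 \le l \le x-1$.

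Next, since $C \cong \mathbb{P}^1$ the argument recalled in Lemma~\ref{lemma:tubular-neighborhood} gives $\sheaf{T_Z}|_C = \sheaf{T_C} \oplus \sheaf{N_{C,Z}}$, so $\sheaf{T_Z}(nE - lC)|_C$ splits as a direct sum of two line bundles on $\mathbb{P}^1$. Using condition (ii) of Lemma~\ref{lemma:E-and-L} (namely $LC = 0$ for $L = x_0 H + E$) one obtains $E \cdot C = -x_0 a$, and hence
\[
\deg \sheaf{T_C}(nE - lC) = 2 - n x_0 a + l b, \qquad \deg \sheaf{N_{C,Z}}(nE - lC) = -b - n x_0 a + l b.
\]
For $0 \le l \le x - 1$ and $n$ sufficiently large both degrees are strictly negative; the binding inequality is $(x-1) b + 2 < n x_0 a$, which holds for all $n \gg 0$ since $x_0, a > 0$. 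A negative-degree line bundle on $\mathbb{P}^1$ has no global sections, so $H^0(C, \sheaf{T_Z}(nE - lC)|_C) = 0$, as required.

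I do not expect a substantial obstacle. The argument is essentially a specialization of the bookkeeping in Lemma~\ref{lemma:h^1(T_Z(nE)|nE)}, and the range $0 \le l \le x - 1$ imposed by the filtration of $xC$ is smaller (hence easier) than the range considered there. The only mild point of care is the identity $E \cdot C_{ij} = -x_0 a_{ij}$, which is immediate from condition (ii) of Lemma~\ref{lemma:E-and-L} whether $k = 1$ or $k \ge 2$.
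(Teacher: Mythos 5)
Your proposal is correct and follows essentially the same route as the paper: filter $\sheaf{O}_{x_{ij}C_{ij}}$ by decomposition sequences so that the statement reduces to $H^0(C_{ij}, \sheaf{T_Z}(nE-lC_{ij})|_{C_{ij}})=0$ for $0 \le l \le x_{ij}-1$, and get this from negativity of the degrees of the $\sheaf{T_{C_{ij}}}$- and $\sheaf{N_{C_{ij},Z}}$-pieces, using $E \cdot C_{ij} = -x_0 a_{ij}$ from $LC_{ij}=0$ --- exactly the computation the paper imports by citing the claim inside the proof of Lemma~\ref{lemma:h^1(T_Z(nE)|nE)}. Your self-contained version is if anything slightly cleaner: it treats all pairs $(i,j)$ uniformly rather than only the case $i=j=1$, $m_1 \ge 2$, and it uses the correct degree $\deg \sheaf{T_{C}} = +2$ where the paper writes $-2$ (immaterial here, since the term $-n x_0 a_{ij}$ dominates for $n \gg 0$).
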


\begin{proof}
We only prove the lemma in case $i=1$, $j=1$, and $m_1 \ge 2$. The proof of the other cases are similar. For simplicity we denote  $C_{1j}$ by $C_j$, $b_{1j}$ by $b_j$, $m_1$ by $m$, and $x_{1j}$ by $x_j$.

According to the claim in the proof of Lemma~\ref{lemma:h^1(T_Z(nE)|nE)}, we have $H^0(C_1, \sheaf{T_Z}(nE-lC_1)|_{C_1})=0$ for $0 \le l \le \left[ \frac{na_1x_0+2}{b_1}\right] -1$. Note that $x_1 \le \left[ \frac{na_1x_0+2}{b_1}\right] + 1$ for $n \gg 0$. Therefore
    \begin{equation}\label{equation:H^0(T_Z(nE-lC_1)|C_1)=0}
    H^0(C_1, \sheaf{T_Z}(nE-lC_1)|_{C_1})=0
    \end{equation}
for $0 \le l \le x_1-2$. Consider the decomposition sequence
    \begin{multline*}
    0 \to \sheaf{T_Z}(nE-(l+1)C_1)|_{(x_1-l-1)C_1} \\
    \to \sheaf{T_Z}(nE-lC_1)|_{(x_1-l)C_1} \\
    \to \sheaf{T_Z}(nE-lC_1)|_{C_1} \to 0.
    \end{multline*}
By \eqref{equation:H^0(T_Z(nE-lC_1)|C_1)=0} and the induction on $l$, it follows that $H^0(\sheaf{T_Z}(nE)|_{x_1C_1})=0$.
\end{proof}

From the exact sequence
    \begin{equation*}
    0 \to \sheaf{T_Z}(nE) \to \sheaf{T_Z}((n+1)E) \to \sheaf{T_Z}((n+1)E) \otimes \sheaf{O_E} \to 0,
    \end{equation*}
we have an induced map $H^1(Z, \sheaf{T_Z}(nE)) \to H^1(Z, \sheaf{T_Z}((n+1)E))$.

\begin{proposition}\label{proposition:injective}
The map $H^1(Z, \sheaf{T_Z}(nE)) \to H^1(Z, \sheaf{T_Z}((n+1)E))$ is injective for every $n \gg 0$.
\end{proposition}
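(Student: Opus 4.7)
The plan is to extract the required injectivity from the long exact sequence of cohomology attached to the displayed short exact sequence
\[0 \to \sheaf{T_Z}(nE) \to \sheaf{T_Z}((n+1)E) \to \sheaf{T_Z}((n+1)E) \otimes \sheaf{O_E} \to 0,\]
whose relevant fragment is
\[H^0(E, \sheaf{T_Z}((n+1)E) \otimes \sheaf{O_E}) \to H^1(Z, \sheaf{T_Z}(nE)) \to H^1(Z, \sheaf{T_Z}((n+1)E)).\]
Thus it suffices to prove that $H^0(E, \sheaf{T_Z}((n+1)E) \otimes \sheaf{O_E}) = 0$ for every $n \gg 0$. I would do this by upgrading the single-component vanishing of Lemma~\ref{lemma:H^0(T_Z(nE-lC_1)|C_1)=0} to the full divisor $E$ via a filtration argument.

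Since the supports of $D_1, \dotsc, D_k$ are pairwise disjoint, $E$ decomposes as a scheme-theoretic disjoint union $E = E_1 \sqcup \dotsb \sqcup E_k$ with $E_i = \sum_{j} x_{ij} C_{ij}$, so it is enough to prove the vanishing on each $E_i$ separately. Fix $i$, list the prime components of $E_i$ with multiplicities as $F_1, \dotsc, F_N$ (so that $E_i = F_1 + \dotsb + F_N$), and set $E_{i,r} = F_1 + \dotsb + F_r$. The standard ideal-sheaf sequences
\[0 \to \sheaf{T_Z}((n+1)E - E_{i,r-1}) \otimes \sheaf{O_{F_r}} \to \sheaf{T_Z}((n+1)E) \otimes \sheaf{O_{E_{i,r}}} \to \sheaf{T_Z}((n+1)E) \otimes \sheaf{O_{E_{i,r-1}}} \to 0\]
then reduce the claim inductively to the vanishing $H^0(C_{ij}, \sheaf{T_Z}((n+1)E - E_{i,r-1}) \otimes \sheaf{O_{C_{ij}}}) = 0$ at every step $r$ with $F_r = C_{ij}$.

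For this final ingredient I would invoke the splitting $\sheaf{T_Z}|_{C_{ij}} = \sheaf{T_{C_{ij}}} \oplus \sheaf{N_{C_{ij},Z}}$ on each rational component and reduce to a degree check for two line bundles on $\mathbb{P}^1$. Combining $E \cdot C_{ij} = -a_{ij}x_0$ (from Lemma~\ref{lemma:E-and-L}(ii) applied to $L = x_0H + E$) with $\deg \sheaf{T_{C_{ij}}} = -2$ and $\deg \sheaf{N_{C_{ij},Z}} = C_{ij}^2 = -b_{ij}$ yields
\begin{align*}
\deg \sheaf{T_{C_{ij}}}((n+1)E - E_{i,r-1}) &= -2 - (n+1)a_{ij}x_0 - E_{i,r-1} \cdot C_{ij},\\
\deg \sheaf{N_{C_{ij},Z}}((n+1)E - E_{i,r-1}) &= -b_{ij} - (n+1)a_{ij}x_0 - E_{i,r-1} \cdot C_{ij}.
\end{align*}
The main obstacle I anticipate is the book-keeping needed to secure a single threshold $n \gg 0$ that works simultaneously at every step of the filtration; when $C_{ij}$ meets earlier components in the ordering the correction $E_{i,r-1} \cdot C_{ij}$ can be positive, but since $E_{i,r-1} \le E$ it is bounded by a constant depending only on the fixed data $\{a_{ij}, b_{ij}, x_{ij}\}$, whereas $(n+1)a_{ij}x_0$ grows linearly in $n$, so a single explicit lower bound on $n$ handles all triples $(i,j,r)$ at once.
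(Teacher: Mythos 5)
Your proposal is correct and is essentially the paper's own argument: the paper likewise reduces injectivity to the vanishing $H^0(E, \sheaf{T_Z}(nE) \otimes \sheaf{O_E})=0$ and proves it by filtering $\sheaf{O_E}$ by curve components down to negative-degree vanishing on the rational curves via $E \cdot C_{ij} = -a_{ij}x_0 < 0$ (there packaged as Lemma~\ref{lemma:H^0(T_Z(nE-lC_1)|C_1)=0} and the claim in Lemma~\ref{lemma:h^1(T_Z(nE)|nE)}, with the blocks $x_{ij}C_{ij}$ peeled off one at a time and the accumulated twists absorbed into inclusions rather than tracked as your $E_{i,r-1}$). One cosmetic slip: the dangerous correction in your degree formulas is when $E_{i,r-1}$ contains earlier copies of $C_{ij}$ itself, so that $E_{i,r-1} \cdot C_{ij} < 0$ makes the degree \emph{less} negative (a positive $E_{i,r-1} \cdot C_{ij}$ from meeting neighboring components only helps), but since you bound the correction by a constant in the fixed data while $(n+1)a_{ij}x_0$ grows linearly, your uniform threshold argument covers both cases and nothing breaks.
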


\begin{proof}
Let $E_i = \sum_{j=1}^{m_j} x_{ij}C_{ij}$. Since the supports of $D_i$'s are disjoint we have $H^1(Z, \sheaf{T_Z}(nE)) = \oplus_{i=1}^{k} H^1(Z, \sheaf{T_Z}(nE_i))$. Therefore it is enough to show that $H^1(Z, \sheaf{T_Z}(nE_i)) \to H^1(Z, \sheaf{T_Z}((n+1)E_i))$ is injective for every $n \gg 0$. We prove only the case of $i=1$ and $m_i \ge 2$. For simplicity we denote  $C_{1j}$ by $C_j$, $b_{1j}$ by $b_j$, $m_1$ by $m$, and $x_{1j}$ by $x_j$.

Consider the exact sequence
    \begin{equation*}
    0 \to \sheaf{T_Z}(nE-x_1C_1)|_{E-x_1C_1} \to \sheaf{T_Z}(nE)|_E \to \sheaf{T_Z}(nE)|_{x_1C_1} \to 0.
    \end{equation*}
By Lemma~\ref{lemma:H^0(T_Z(nE-lC_1)|C_1)=0}, we have $H^0(\sheaf{T_Z}(nE)|_{x_1C_1})=0$; hence,
    \begin{equation}\label{equation:H^0(T_Z(nE)|E)}
    H^0(\sheaf{T_Z}(nE)|_E) = H^0(\sheaf{T_Z}(nE-x_1C_1)|_{E-x_1C_1}) \subset H^0(\sheaf{T_Z}(nE)|_{E-x_1C_1}).
    \end{equation}
Consider the exact sequence
    \begin{equation*}
    0 \to \sheaf{T_Z}(nE-x_2C_2)|_{E-x_1C_1-x_2C_2} \to \sheaf{T_Z}(nE)|_{E-x_1C_1} \to \sheaf{T_Z}(nE)|_{x_2C_2} \to 0.
    \end{equation*}
By Lemma~\ref{lemma:H^0(T_Z(nE-lC_1)|C_1)=0}, we have $H^0(\sheaf{T_Z}(nE)|_{x_2C_2})=0$. Therefore it follows from \eqref{equation:H^0(T_Z(nE)|E)} that
    \begin{equation*}
    \begin{split}
    H^0(\sheaf{T_Z}(nE)|_E) &\subset H^0(\sheaf{T_Z}(nE)|_{E-x_1C_1}) = H^0(\sheaf{T_Z}(nE-x_2C_2)|_{E-x_1C_1-x_2C_2}) \\
    & \subset H^0(\sheaf{T_Z}(nE)|_{E-x_1C_1-x_2C_2}).
    \end{split}
    \end{equation*}
Repeating this process, then we have
    \begin{equation*}
    H^0(\sheaf{T_Z}(nE)|_E) \subset H^0(\sheaf{T_Z}(nE)|_{E-x_1C_1}) \subset \dotsb \subset H^0(\sheaf{T_Z}(nE)|_{x_mC_m}) = 0. \qedhere
    \end{equation*}
\end{proof}

\section{Properties of the infinitesimal deformations}
\label{section:nontrivial}

A small deformation of the domain $Y$ in $Z$ changes the complex structure only near the boundary of $Y$ while keeping unchanged the complex structure away from $\partial Y$. In this section we will show that the deformations induced from $H^1(Z, \sheaf{T_Z}(nE))$ are not small deformations. In fact, any non-zero $\alpha \in H^1(Z, \sheaf{T_Z}(nE))$ induces a non-trivial infinitesimal deformation of a certain curve away from $\partial Y$; Theorem~\ref{theorem:nontrivial}.

Since the divisor $L$ in Lemma~\ref{lemma:E-and-L} descends to an ample divisor on $X$, we may choose an irreducible smooth curve $C \in \linsys{nL}$ ($n \gg 0$) on $Z$ such that $C \cap U \neq \varnothing$.

\begin{theorem}\label{theorem:nontrivial}
Any infinitesimal deformation induced from a non-zero element $H^1(Z, \sheaf{T_Z}(nE))$ preserves $C \cap Y$ but changes infinitesimally the complex structure of $C \cap Y$.
\end{theorem}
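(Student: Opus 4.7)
The plan is to restrict the nonzero class $\alpha \in H^1(Z, \sheaf{T_Z}(nE))$ to the smooth curve $C$ and then split the restricted class via the tangent--normal sequence on $C$ into a tangential piece (the induced deformation of $C$) and a normal piece (the obstruction to preserving $C$).

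First I would verify that $C\cap\supp(E)=\varnothing$. Indeed $C\linequiv nL$ and $L\cdot C_{ij}=0$ by Lemma~\ref{lemma:E-and-L}, so $C\cdot C_{ij}=0$; but $C^2=n^2L^2>0$ while $C_{ij}^2<0$, so the curves $C$ and $C_{ij}$ are distinct, and two distinct irreducible curves with zero intersection number must be disjoint. Hence $\sheaf{T_Z}(nE)|_C=\sheaf{T_Z}|_C$ and $\sheaf{N_{C,Z}}(nE)=\sheaf{N_{C,Z}}$. By the same argument as in the proof of Theorem~\ref{theorem:infinitesimal_deformation} (using $nE-C\linequiv -nx_0H$, Serre duality, and Kodaira vanishing to kill $H^1(Z,\sheaf{T_Z}(nE-C))$ for $n\gg0$), the restriction map
\[
 \rho\colon H^1(Z,\sheaf{T_Z}(nE))\hookrightarrow H^1(C,\sheaf{T_Z}|_C)
\]
is injective, and so $\rho(\alpha)\neq0$.

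Next I would analyze $\rho(\alpha)$ via the long exact sequence
\[
 H^0(C,\sheaf{N_{C,Z}})\xrightarrow{\delta} H^1(C,\sheaf{T_C})\xrightarrow{\phi} H^1(C,\sheaf{T_Z}|_C)\xrightarrow{\psi} H^1(C,\sheaf{N_{C,Z}})
\]
attached to $0\to\sheaf{T_C}\to\sheaf{T_Z}|_C\to\sheaf{N_{C,Z}}\to0$. Preservation of $C$ (and hence of $C\cap Y$) amounts to $\psi(\rho(\alpha))=0$; this composition is the connecting map of the short exact sequence $0\to\sheaf{T_Z\langle C\rangle}(nE)\to\sheaf{T_Z}(nE)\to i_*\sheaf{N_{C,Z}}\to0$, and for $n\gg 0$ it vanishes because $H^1(C,\sheaf{N_{C,Z}})=0$. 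Once this is granted, $\rho(\alpha)=\phi(\beta)$ for some $\beta\in H^1(C,\sheaf{T_C})$; since $\rho(\alpha)\neq0$, any such preimage $\beta$ is itself nonzero, and $\beta$ is the Kodaira--Spencer class of the induced infinitesimal deformation of the smooth curve $C$.

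To pass from $C$ to $C\cap Y$, observe that $C\setminus (C\cap Y)=C\cap U$ consists of small disks near $\partial Y$, disjoint from $\supp(E)$. If the induced deformation of $C\cap Y$ were trivial — i.e.\ a \emph{small} deformation in the sense of Section~\ref{section:nontrivial}, representable by a cocycle supported in a collar of $\partial Y$ — then restricting to $C\cap Y\subset C$ would produce a trivialization of $\beta$ outside a small piece of $C$, and $\beta$ would be a coboundary globally on $C$, contradicting $\beta\neq 0$. So the deformation induced by $\alpha$ genuinely changes the complex structure of $C\cap Y$.

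The main obstacle I foresee is the required vanishing $H^1(C,\sheaf{N_{C,Z}})=0$. By adjunction $2g(C)-2-C^2=K_Z\cdot C=nK_Z\cdot L$, so the vanishing is equivalent to $K_Z\cdot L<0$, which can fail — for instance when $Z$ is of general type. In that case one cannot invoke cohomological vanishing and must instead argue directly that the composition $\psi\circ\rho$ is zero, exploiting the fact that elements of $H^1(Z,\sheaf{T_Z}(nE))$ are, modulo $H^1(Z,\sheaf{T_Z})$, localized along $nE$, together with $C\cap E=\varnothing$, which should force the normal component along $C$ to be a coboundary. Making this last step rigorous — presumably by an explicit Čech-theoretic bookkeeping along the lines of Takamura~\cite{Takamura-1996} — is the technical heart of the argument.
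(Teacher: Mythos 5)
Your overall plan --- restrict the nonzero class to the curve and split it via the tangent--normal sequence --- is the right instinct, but you run it on the \emph{compact} curve $C$ inside $Z$, and that is exactly where it breaks, as you yourself flag at the end: the obstruction group $H^1(C,\sheaf{N_{C,Z}})\cong H^0(C,K_Z|_C)^\vee$ need not vanish (e.g.\ for $Z$ of general type), and your proposed fallback via ``explicit \v{C}ech bookkeeping'' is not carried out, so the proof is incomplete at its central step. The paper's proof avoids this obstruction entirely by a device you missed: the curve $C\in\linsys{nL}$ is deliberately chosen so that $C\cap U\neq\varnothing$ (while still $C\cap\supp(E)=\varnothing$, exactly as you argued from $L\cdot C_{ij}=0$), which makes $C'=C\cap Y$ a \emph{noncompact}, hence Stein, Riemann surface. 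The relevant sequence is then the log sequence on the open surface $Y$ itself, $0\to\sheaf{T_Y}(-\log{C'})\to\sheaf{T_Y}\to\sheaf{N_{C',Y}}\to 0$, and Steinness of $C'$ gives $H^1(C',\sheaf{N_{C',Y}})=0$ for free, so every class in $H^1(Y,\sheaf{T_Y})$ lifts to $H^1(Y,\sheaf{T_Y}(-\log{C'}))$ and the deformation preserves $C'$ with no positivity hypothesis on $-K_Z$ whatsoever. Note also that the theorem only asserts preservation of $C\cap Y$ under deformations of $Y$; your attempt to preserve all of $C$ is a stronger statement that is not even well posed here, since $\alpha\in H^1(Z,\sheaf{T_Z}(nE))$ induces a deformation of $Y$ (via the injection into $H^1(Y,\sheaf{T_Y})$ from Theorem~\ref{theorem:infinitesimal_deformation}), not a deformation of $Z$.

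Your final step has an independent flaw as well: from a nonzero Kodaira--Spencer class $\beta\in H^1(C,\sheaf{T_C})$ you cannot conclude that the induced deformation of $C\cap Y$ is nontrivial. Triviality on the open piece $C'$ does not propagate to a global coboundary on $C$ --- the restriction $H^1(C,\sheaf{T_C})\to H^1(C',\sheaf{T_{C'}})$ is very far from injective --- so ``$\beta\neq 0$ on $C$'' detects nothing about $C'$. Nontriviality on $C'$ must be detected directly, and the paper does this with a commutative square comparing $\phi:H^1(Y,\sheaf{T_Y}(-\log{C'}))\to H^1(C',\sheaf{T_{C'}})$ with the restriction $H^1(Y,\sheaf{T_Y})\to H^1(C',\sheaf{T_Y}|_{C'})$: the injectivity argument from the proof of Theorem~\ref{theorem:infinitesimal_deformation} (the piece of your proposal that is correct) shows that $\alpha$ restricts to a nonzero class on $C'$, and commutativity then forces $\phi(\widetilde{\alpha})\neq 0$ for any lift $\widetilde{\alpha}$. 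In short, the missing idea is to place the entire argument on $Y$ and on the bordered curve $C'=C\cap Y$, where Steinness kills the normal-bundle obstruction; the compact-curve route you took genuinely fails, for precisely the reason you identified but could not repair.
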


\begin{proof}
We use a similar method in Takamura~\cite{Takamura-1996}. Let $C'=C \cap Y$. Consider the exact sequence
    \begin{equation*}
    H^1(Y, \sheaf{T_Y}(-\log{C'})) \xrightarrow{\psi} H^1(Y, \sheaf{T_Y}) \to H^1(C', \sheaf{N_{C',Y}})
    \end{equation*}
induced from
    \begin{equation*}
    0 \to \sheaf{T_Y}(-\log{C'}) \to \sheaf{T_Y} \to \sheaf{N_{C',Y}} \to 0.
    \end{equation*}
Since $C \cap U \neq \varnothing$, $C'$ is stein; hence, $H^1(C', \sheaf{N_{C',Y}})=0$ and the map $\psi$ is surjective. Therefore any infinitesimal deformation of $Y$ induced from a non-zero element in $H^1(Z, \sheaf{T_Z}(nE))$ preserves $C'$.

Let $\alpha \in H^1(Z, \sheaf{T_Z}(nE))$ be a non-zero element. We denote again by $\alpha$ the image of $\alpha$ of the injective map $H^1(Z, \sheaf{T_Z}(nE)) \to H^1(Y, \sheaf{T_Y})$. Take any $\widetilde{\alpha} \in H^1(Y, \sheaf{T_Y}(-\log{C'}))$ such that $\psi(\widetilde{\alpha}) = \alpha$. Consider the commutative diagram
    \begin{equation*}
    \xymatrix{
    H^1(Y, \sheaf{T_Y}(-\log{C'})) \ar[r]^{\psi} \ar[d]_{\phi} & H^1(Y, \sheaf{T_Y}) \ar[d] \\
    H^1(C', \sheaf{T_{C'}}) \ar[r] & H^1(C', \sheaf{T_Y}|_{C'})
    }
    \end{equation*}
In order to prove that $\widetilde{\alpha}$ changes infinitesimally the complex structure of $C'$, it is enough to show that the image of $\widetilde{\alpha}$ by $\phi : H^1(Y, \sheaf{T_Y}(-\log{C'})) \to H^1(C', \sheaf{T_{C'}})$ is non-zero. In the proof of Proposition~\ref{theorem:infinitesimal_deformation}, we showed that the restriction $\widetilde{\alpha}|_C' \in H^1(C', \sheaf{T_Z}|_{C'})$ is non-zero because
    \begin{equation*}
    H^1(Z, \sheaf{T_Z}(nE)) \to H^1(C', \sheaf{T_Z}(nE)|_{C'}) \cong H^1(Z, \sheaf{T_Z}|_{C'})
    \end{equation*}
is injective for $n \gg 0$. Since the above diagram commutes, $\phi(\widetilde{\alpha})$ is also non-zero.
\end{proof}

\subsection*{Acknowledgements}

The author would like to thank Heesang Park for valuable discussions during the work.

\providecommand{\bysame}{\leavevmode\hbox to3em{\hrulefill}\thinspace}
\providecommand{\MR}{\relax\ifhmode\unskip\space\fi MR }
\providecommand{\MRhref}[2]{%
  \href{http://www.ams.org/mathscinet-getitem?mr=#1}{#2}
}
\providecommand{\href}[2]{#2}


\begin{thebibliography}{1}

\bibitem{Takamura-1996}
S.~Takamura, \emph{Infinitesimal deformations of complex surfaces with
  boundary}, Math. Ann. \textbf{306} (1996), no.~1, 195--204. \MR{97f:32017}

\bibitem{Takamura-2000}
\bysame, \emph{C{R} structures on the {$3$}-sphere and the blow-up of the
  projective plane}, Math. Nachr. \textbf{209} (2000), 179--187.
  \MR{2000j:32059}

\end{thebibliography}
\end{document}